\newtheorem{thm}{Theorem}[section]
\newtheorem{lem}[thm]{Lemma}
\theoremstyle{definition}
\newtheorem{definition}[thm]{Definition}
\theoremstyle{remark}
\newtheorem{remark}[thm]{Remark}
\numberwithin{equation}{section}
\begin{document}

\title{Hinged beam dynamics}

\author{David Raske}
\address{1210 Washtenaw, Ypsilanti, MI, 48197}
\email{nonlinear.problem.solver@gmail.com}
\subjclass[2010]{Primary 35L35; Secondary 35Q99, 35L76}
\keywords{nonlinear partial differential equations; Galerkin method; continuum mechanics}

\begin{abstract}
In this paper we prove large-time existence and uniqueness of high regularity weak solutions to some initial/boundary value problems involving a nonlinear fourth order wave equation. These sorts of problems arise naturally in the study of vibrations in beams that are hinged at both ends. The method used to prove large-time existence is the Galerkin approximation method.

\end{abstract}                        

\maketitle

\section{Introduction} 

Let $a$ and $b$ be two real numbers such that $a < b$. Let $\Omega$ be the open interval $(a,b)$. Let $T$ be a positive real number. Let $u = u(x,t)$ be a real-valued function defined on $\Omega \times (0,T)$. Let $u_0 = u_0(x)$ and $u_1 = u_1(x)$ be two real-valued functions that are defined on $\Omega$. Let $F_1$ and $F_2$ be two real-valued functions defined on $\mathbb{R}$. Let $f=f(x,t)$ be a real-valued function defined on $\Omega \times (0,T)$. The initial/boundary value problem
\begin{equation}\label{IBVP}
\begin{split}
&  u_{tt} + F_1(u_t) + u_{xxxx} + F_2(u) = f \text{ on } \Omega \times (0,T), \\ & u(a,t) = u(b,t) = 0 \text{ for all } t \in (0,T), \\ & u_{xx}(a,t) = u_{xx}(b,t) = 0 \text{ for all } t \in (0,T), \\ & u(x,0) = u_0(x) \text{ for all } x \in \Omega, \\ & u_t(x,0) = u_1(x) \text{ for all } x \in \Omega.
\end{split}
\end{equation}
occurs naturally in the study of vibrations in beams that are hinged at both ends. The goal of this paper is to find conditions on $F_1$, $F_2$, and the data of the above problem that guarantee the existence and uniqueness of solutions.

Before we state the main result of the paper, we need to define some function spaces.  Let  $H^2_*(\Omega):= \{ u \in H^2(\Omega) | u \in H^1_0(\Omega) \}$.  Let $H^4_*(\Omega):=\{u \in H^4(\Omega)| u \in H^1_0(\Omega) \text{ and such that } u_{xx} \in H^1_0(\Omega)\}$. We will endow $H^2_*(\Omega)$ with the inner-product 
$$
(u,v)_{H^2_*} = \int_{\Omega} u_{xx}v_{xx} \, dx,
$$
and endow $H^4_*(\Omega)$ with the inner-product
$$
(u,v)_{H^4_*} = \int_{\Omega} u_{xxxx}v_{xxxx} \, dx.
$$
We will see later that these inner-products make $H^2_*(\Omega)$ and $H^4_*(\Omega)$ Hilbert spaces. We will also say that a function $u$ is a {\it high regularity weak solution} of the initial/boundary value problem \eqref{IBVP} if (a) $u$ belongs to 
$$
L^\infty(0,T;H_*^4(\Omega)), 
$$
$$
W^{1,\infty}(0,T;H_*^2(\Omega)),
$$
and 
$$
W^{2,\infty}(0,T;L^2(\Omega));
$$
(b) $u$ satisfies the initial conditions and boundary value conditions of \eqref{IBVP}; and (c) $u$ satisfies the equation
\begin{equation}\label{strong}
(u''(t),v)_{L^2} + (u(t), v)_{H^2_*} + (F_1(u'(t)), v)_{L^2} + (F_2(u(t)),v)_{L^2} = (f,v)_{L^2}
\end{equation}
for all $v \in H^2_*(\Omega)$ and for a.e. $t \in [0,T]$. (Here $(\cdot,\cdot)_{L^2}$ is the standard inner-product on $L^2(\Omega)$.) Note, as well, that the initial conditions contained in \eqref{IBVP} make sense since $u \in C([0,T]; H^2_*(\Omega))$ and $u_t \in C([0,T]; L^2(\Omega))$.

We can now state the main result of the paper.

\begin{thm} Let $a$ and $b$ be two real numbers, with $a < b$. Let $T$ be a positive real number. Let $\Omega$ be the open interval $(a,b)$. Let $f$ be a $C^1$ map from $[0,T]$ into $L^2(\Omega)$. Let $F_1$ be a nondecreasing $C^1$ function from $\mathbb{R}$ into $\mathbb{R}$, with $F_1(0)=0$.  Let $F_2$ be a $C^1$ function from $\mathbb{R}$ into $\mathbb{R}$ such that there exists a non-positive real number $c$ such that $\int_{0}^s F_2(t) dt \geq c$ for all $s \in \mathbb{R}$. Furthermore, let $u_0$ be an element of $H_*^4(\Omega)$ and let $u_1$ be an element of $H^2_*(\Omega)$. Then there exists a unique high regularity weak solution of the initial/boundary value problem \eqref{IBVP}.
\end{thm}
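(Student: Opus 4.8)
The plan is to use the Galerkin method based on the eigenfunctions of the biharmonic operator under hinged boundary conditions. On $\Omega=(a,b)$ these are $w_k(x)=\sin\!\big(k\pi(x-a)/(b-a)\big)$, which satisfy $w_k''''=\lambda_k w_k$ with $\lambda_k=\big(k\pi/(b-a)\big)^4$, together with $w_k(a)=w_k(b)=w_k''(a)=w_k''(b)=0$. The decisive structural feature is that $\{w_k\}$ is \emph{simultaneously} an orthogonal basis of $L^2(\Omega)$, $H^2_*(\Omega)$, and $H^4_*(\Omega)$, so the truncations $y_m$ of $y$ and $z_m$ of $z$ converge in all the relevant norms at once; in particular $y_m\to y$ in $H^4_*(\Omega)$ and $z_m\to z$ in $H^2_*(\Omega)$. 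I would seek $u_m(t)=\sum_{k\le m}g_{km}(t)w_k$ solving the projected system
\[
(u_m''(t),w_k)_{L^2}+(u_m(t),w_k)_{H^2_*}+(F_1(u_m'(t)),w_k)_{L^2}+(F_2(u_m(t)),w_k)_{L^2}=(f(t),w_k)_{L^2}
\]
for $k=1,\dots,m$, with $u_m(0)=y_m$ and $u_m'(0)=z_m$. Since $F_1,F_2$ are $C^1$, this nonlinear ODE system has a unique local $C^2$ solution by Picard--Lindel\"of, and the a priori bounds below show it extends to all of $[0,T]$.

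The core of the argument is a hierarchy of energy estimates uniform in $m$. Testing the projected equation against $u_m'$ and writing $\Phi(s):=\int_{\overline{s}}^{s}F_2(t)\,dt$, one obtains, with $E_m(t):=\tfrac12\|u_m'(t)\|_{L^2}^2+\tfrac12\|u_m(t)\|_{H^2_*}^2+\int_\Omega\Phi(u_m(t))\,dx$,
\[
E_m'(t)+\int_\Omega F_1(u_m'(t))\,u_m'(t)\,dx=(f(t),u_m'(t))_{L^2}.
\]
Because $F_1$ is nondecreasing with $F_1(0)=0$ the integral term is nonnegative, and because $\Phi\ge c$ the energy is bounded below; a Grönwall argument (using $\|u_m'\|_{L^2}^2\le 2E_m+2|c|\,|\Omega|$) then bounds $u_m$ in $L^\infty(0,T;H^2_*(\Omega))$ and $u_m'$ in $L^\infty(0,T;L^2(\Omega))$, and the embedding $H^2_*(\Omega)\hookrightarrow C(\overline\Omega)$ gives a uniform $L^\infty$ bound on $u_m$. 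For the second estimate I would differentiate the projected equation in $t$ and test against $u_m''$: the term $\int_\Omega F_1'(u_m')(u_m'')^2\,dx$ is again nonnegative by monotonicity of $F_1$, the term with $F_2'(u_m)u_m'$ is controlled by the $L^\infty$ bound on $u_m$ and the $L^2$ bound on $u_m'$, and $f\in C^1([0,T];L^2(\Omega))$ handles the right-hand side; Grönwall then bounds $u_m'$ in $L^\infty(0,T;H^2_*(\Omega))$ and $u_m''$ in $L^\infty(0,T;L^2(\Omega))$, \emph{provided} $\|u_m''(0)\|_{L^2}$ is bounded. This is exactly where $y\in H^4_*(\Omega)$ enters: evaluating the equation at $t=0$ writes $u_m''(0)$ as the $L^2$-projection of $f(0)-y_{m,xxxx}-F_1(z_m)-F_2(y_m)$, which is bounded since $y_{m,xxxx}\to y_{xxxx}$ in $L^2$. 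Finally the $H^4_*$ bound comes for free: integrating by parts shows the projected equation says $u_{m,xxxx}=P_m\big[f-u_m''-F_1(u_m')-F_2(u_m)\big]$, whose argument is already bounded in $L^2$, so $u_m$ is bounded in $L^\infty(0,T;H^4_*(\Omega))$.

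With these three uniform bounds I would extract, along a subsequence, weak-$*$ limits $u_m\rightharpoonup u$ in $L^\infty(0,T;H^4_*(\Omega))$, $u_m'\rightharpoonup u'$ in $L^\infty(0,T;H^2_*(\Omega))$, and $u_m''\rightharpoonup u''$ in $L^\infty(0,T;L^2(\Omega))$. To pass to the limit in the nonlinear terms I would invoke the Aubin--Lions--Simon lemma: the bounds make $\{u_m\}$ and $\{u_m'\}$ relatively compact in $C([0,T];C(\overline\Omega))$, giving uniform convergence $u_m\to u$ and $u_m'\to u'$ on $\overline\Omega\times[0,T]$, whence $F_1(u_m')\to F_1(u')$ and $F_2(u_m)\to F_2(u)$ uniformly by continuity. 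Passing to the limit in the projected equation for a fixed $w_j$ and using density of $\bigcup_m\mathrm{span}\{w_1,\dots,w_m\}$ in $H^2_*(\Omega)$ yields \eqref{strong} for all $v\in H^2_*(\Omega)$; the uniform convergence also forces $u(0)=y$ and $u'(0)=z$, and the stated time continuity follows from a standard interpolation (Lions--Magenes) argument.

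For uniqueness I would set $w=u_1-u_2$ for two solutions, test the difference equation against $w'$, discard the nonnegative term $(F_1(u_1')-F_1(u_2'),w')_{L^2}$, and bound the $F_2$-difference by $L\|w\|_{L^2}\|w'\|_{L^2}$ using that $F_2$ is Lipschitz on the bounded range of the already $L^\infty$-bounded solutions; Grönwall with zero initial data then forces $w\equiv 0$. I expect the main obstacle to be the second-order energy estimate — arranging the time-differentiated equation so that the favorable sign of $F_1'$ is exploited while the $F_2'$ term and the initial datum $u_m''(0)$ are absorbed — since both the $H^2_*$ bound on $u_m'$ and, through the equation, the $H^4_*$ bound on $u_m$ ultimately rest on it.
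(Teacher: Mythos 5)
Your proposal is correct and follows essentially the same route as the paper: the same sine eigenbasis (simultaneously orthogonal in $L^2$, $H^2_*$, and $H^4_*$), the same hierarchy of energy estimates (test with $u_m'$, then differentiate in time and test with $u_m''$ using the sign of $F_1'$ and the $H^4_*$ regularity of $y$ to control $u_m''(0)$, then recover the $H^4_*$ bound from the equation), and the same Gronwall-based uniqueness argument. The only cosmetic difference is that you invoke Aubin--Lions--Simon to get compactness in $C([0,T];C(\overline\Omega))$ for the nonlinear terms, whereas the paper gets $C([0,T];L^2)$ convergence and handles the nonlinearities with a hand-rolled local Lipschitz lemma; both work.
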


The method used to prove existence of high regularity weak solutions is the Galerkin method. The main source of difficulty is the possibility that $u'$ shows up inside a nonlinear function. We overcome this problem by obtaining stronger bounds on the Galerkin approximations than usual. This is accomplished by differentiating the equations that the finite dimensional approximations satisfy and using the fact that $F_1$ is an increasing function. Uniqueness follows from an application of the mean value theorem and Gronwall's inequality to the problem.

The organization of this paper is as follows. Section two consists of remarks and lemmas that will be used in the proof of Theorem 1.1. Section three consists of the proof of Theorem 1.1. The proof has been divided into seven steps to make it easier to read.

\section{Preliminary Material}
\subsection{Higher order Hilbert spaces}

Let $a$ and $b$ be two real numbers such that $a < b$. Let $\Omega$ be the open interval $(a,b)$. Let $\{ \lambda_i \}_{i=1}^\infty$ be the set of eigenvalues for the eigenvalue problem
\begin{equation}\label{eigenfunctions}
\begin{split}
-u_{xx} & = \lambda u \quad \text{on} \quad \Omega \\
       u(a) & =0 \\
        u(b) &=0,   
\end{split}
\end{equation}
ordered in the usual manner. Note that all of the eigenvalues are positive. Let $\{e_i\}_{i=1}^\infty$ be the corresponding set of eigenfunctions. We will assume that they are normalized with respect to the $L^2(\Omega)$ norm, $||\cdot||_{L^2}$. Due to the spectral theory of symmetric, compact operators, we can assume that $e_i$ is orthogonal with respect to the $L^2(\Omega)$ inner-product to $e_j$ if $i \neq j$.
Note as well that all of the eigenfunctions belong to $H_*^2(\Omega)$ and $H_*^4(\Omega)$. As we'll now see we can say a lot more about $\{e_i\}_{i=1}^\infty$, $H_*^2(\Omega)$, and $H_*^4(\Omega)$.

First, we need the following

\begin{definition} 
Let $u$ be a summable function on the open interval $\Omega$. Then we will let $(u)_\Omega$ denote the average of $u$ on $\Omega$.
\end{definition}

\begin{lem}
Let $a$ and $b$ be two real numbers with $a < b$. Let $\Omega$ be the open interval $(a,b)$. Then (i) there exists a positive real number $C$ such that for all $u \in H^2_*(\Omega)$ we have that $||u||_{H^2_*}^2 \geq C ||u||_{L^2}^2$. We also have that (ii) there exists a positive real number $C$ such that for all $u \in H^4_*(\Omega)$, $||u||_{H^4_*}^2 \geq C ||u||_{L^2}^2$.
\end{lem}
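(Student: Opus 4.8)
The plan is to read both statements as Poincar\'e-type estimates and to prove them using the standard Poincar\'e (Friedrichs) inequality on $H^1_0(\Omega)$ together with a single integration by parts. Throughout I will take the constant to be a power of the smallest Dirichlet eigenvalue $\lambda_1$, which is strictly positive by the remark preceding the lemma. The unifying idea is to avoid differentiating boundary conditions we do not have: rather than applying Poincar\'e directly to $u_x$ or $u_{xxx}$, I will pair the top-order derivative against $u$ itself in $L^2(\Omega)$, integrate by parts, and estimate.

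For part (i), let $u \in H^2_*(\Omega)$, so in particular $u \in H^1_0(\Omega)$ and $u(a)=u(b)=0$. First I would compute $\int_\Omega u_{xx}\,u\,dx$ by integrating by parts once; the boundary term $[u_x u]_a^b$ vanishes since $u(a)=u(b)=0$, leaving $\int_\Omega u_{xx}\,u\,dx = -\int_\Omega u_x^2\,dx$. Applying Cauchy--Schwarz to the left side and the Poincar\'e inequality $\int_\Omega u_x^2\,dx \geq \lambda_1 ||u||_{L^2}^2$ to the right side gives $\lambda_1 ||u||_{L^2}^2 \leq \int_\Omega u_x^2\,dx = -\int_\Omega u_{xx}u\,dx \leq ||u_{xx}||_{L^2}\,||u||_{L^2}$, and hence $||u||_{H^2_*}^2 = ||u_{xx}||_{L^2}^2 \geq \lambda_1^2\,||u||_{L^2}^2$. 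Thus (i) holds with $C = \lambda_1^2$.

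For part (ii), let $u \in H^4_*(\Omega)$, so that in addition $u_{xx}\in H^1_0(\Omega)$, i.e. $u_{xx}(a)=u_{xx}(b)=0$. I would pair $u_{xxxx}$ against $u$ and integrate by parts twice. The first boundary term vanishes because $u(a)=u(b)=0$, and the second boundary term $[u_{xx}u_x]_a^b$ vanishes precisely because $u_{xx}(a)=u_{xx}(b)=0$; what survives is $\int_\Omega u_{xxxx}\,u\,dx = \int_\Omega u_{xx}^2\,dx$. Cauchy--Schwarz then gives $\int_\Omega u_{xx}^2\,dx \leq ||u_{xxxx}||_{L^2}\,||u||_{L^2}$, while part (i) supplies the lower bound $\int_\Omega u_{xx}^2\,dx \geq \lambda_1^2\,||u||_{L^2}^2$. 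Combining the two yields $\lambda_1^2\,||u||_{L^2}^2 \leq ||u_{xxxx}||_{L^2}\,||u||_{L^2}$, so that $||u||_{H^4_*}^2 = ||u_{xxxx}||_{L^2}^2 \geq \lambda_1^4\,||u||_{L^2}^2$, giving (ii) with $C = \lambda_1^4$.

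The part of this that requires care is not the algebra but the boundary terms. The definition of $H^2_*(\Omega)$ controls only $u$, not $u_x$, at the endpoints, so one cannot simply apply Poincar\'e a second time to $u_x$; routing the estimate through the pairing $\int_\Omega u_{xx}u\,dx$ sidesteps the need for any condition on $u_x$. In (ii) the vanishing of $[u_{xx}u_x]_a^b$ is exactly what the extra hypothesis $u_{xx}\in H^1_0(\Omega)$ provides, and this is where the full strength of the $H^4_*(\Omega)$ definition is used. I would justify each integration by parts by density of smooth functions in the relevant space, and I would remark that an equivalent route expands $u = \sum_i c_i e_i$ in the eigenfunction basis and reads the bounds $||u_{xx}||_{L^2}^2 = \sum_i \lambda_i^2 c_i^2 \geq \lambda_1^2\sum_i c_i^2$ and $||u_{xxxx}||_{L^2}^2 = \sum_i \lambda_i^4 c_i^2 \geq \lambda_1^4 \sum_i c_i^2$ directly off Parseval's identity.
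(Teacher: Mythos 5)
Your argument is correct, but it runs along a genuinely different track from the paper's. The paper never forms the pairing $\int_\Omega u_{xx}u\,dx$; instead it chains two Poincar\'e inequalities through the intermediate derivative: it applies the mean-zero (Poincar\'e--Wirtinger) form to $u_x$, using that $(u_x)_\Omega = \frac{u(b)-u(a)}{b-a}=0$ to drop the average and obtain $\|u_{xx}\|_{L^2}\geq C\|u_x\|_{L^2}$, and then the standard $H^1_0$ Poincar\'e inequality to get $\|u_x\|_{L^2}\geq C\|u\|_{L^2}$; part (ii) repeats the scheme one level up, with $u_{xx}\in H^1_0(\Omega)$ playing the role of $u$. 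Note that the boundary information you use to kill the boundary terms $[u_xu]_a^b$ and $[u_{xx}u_x]_a^b$ is exactly the information the paper uses to kill the averages $(u_x)_\Omega$ and $(u_{xxx})_\Omega$, so the two proofs consume the same hypotheses, just in dual ways. What your route buys is explicit constants ($\lambda_1^2$ and $\lambda_1^4$) and a shorter path that never estimates $\|u_x\|$ or $\|u_{xxx}\|$; what the paper's route buys is precisely those intermediate bounds, which sit naturally alongside its Remark 2.3 on equivalence with the full $H^2$ and $H^4$ norms. One caution on your closing aside: the Parseval/eigenfunction-expansion alternative is circular at this point in the paper, since the fact that $\{e_i\}$ is an orthogonal basis of $H^2_*(\Omega)$ and $H^4_*(\Omega)$ is only proved in Lemma 2.4, whose setting (completeness of these inner-product spaces) rests on the present lemma; your main integration-by-parts argument does not suffer from this and stands on its own.
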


\begin{proof}
First let us prove assertion (i). Invoking the Poincar\'e inequality, we have the existence of a positive real number $C$ such that
\begin{equation}\label{Poincare}
\int_\Omega (u_{xx})^2 \, dx \geq C \int_\Omega (u_{x} - (u_{x})_\Omega)^2 \, dx
\end{equation}
for all $u \in H^2_*(\Omega)$. Since $u$ vanishes on the boundary of $\Omega$ the above inequality is equivalent to the existence of a positive real number $C$ such that
\begin{equation}\label{Poincare 2}
\int_\Omega (u_{xx})^2 \, dx \geq C \int_\Omega (u_{x})^2 \, dx
\end{equation}
for all $u \in H^2_*(\Omega)$. Invoking the Poincar\'e inequality again, we have the existence of a positive real number $C$ such that
\begin{equation}\label{Poincare 3}
\int_\Omega (u_{x})^2 \, dx \geq C \int_\Omega u^2 \, dx
\end{equation}
for all $u \in H^1_0(\Omega)$. Combining \eqref{Poincare 2} with \eqref{Poincare 3}, we obtain assertion (i).

Now let us prove assertion (ii). Invoking the Poincar\'e inequality we have the existence of a positive real number $C$ such that
\begin{equation}\label{Poincare 4}
\int_\Omega (u_{xxxx})^2 \, dx \geq C \int_\Omega (u_{xxx} - (u_{xxx})_\Omega)^2 \, dx
\end{equation}
for all $u \in H^4_*(\Omega)$. Since $u_{xx}$ vanishes on the boundary of $\Omega$ the above inequality is equivalent to the existence of a positive real number $C$ such that
\begin{equation}\label{Poincare 5}
\int_\Omega (u_{xxxx})^2 \, dx \geq C \int_\Omega (u_{xxx})^2 \, dx
\end{equation}
for all $u \in H^4_*(\Omega)$. Invoking the Poincar\'e inequality again, we have the existence of a positive real number $C$ such that
\begin{equation}\label{Poincare 6}
\int_\Omega (u_{xxx})^2 \, dx \geq C \int_\Omega (u_{xx})^2 \, dx
\end{equation}
for all $u \in H^4_*(\Omega)$. Combining \eqref{Poincare 5} with \eqref{Poincare 6} we have the existence of a positive real number $C$ such that
\begin{equation}\label{Poincare 7}
\int_\Omega (u_{xxxx})^2 \, dx \geq C \int_\Omega (u_{xx})^2 \, dx
\end{equation}
for all $u \in H^4_*(\Omega)$. Recalling assertion (i), we have assertion (ii).

\end{proof}

\begin{remark} Due to the intermediate derivatives theorem (see Theorem 4.15 of \cite{A}), Lemma 2.2 implies that the $H^2_*(\Omega)$ norm is equivalent to the usual $H^2(\Omega)$ norm provided we restrict the latter norm to the linear subspace $H^2_*(\Omega)$. Lemma 2.2 also implies that the $H^4_*(\Omega)$ norm is equivalent to the usual $H^4(\Omega)$ norm provided that we restrict the latter norm to the linear subspace $H^4_*(\Omega)$. In both cases we have completeness of the respective inner product spaces.
\end{remark}

\begin{lem} (i) $\{e_i\}_{i=1}^\infty$ forms an orthogonal basis for $H_*^2(\Omega)$, and (ii) $\{e_i\}_{i=1}^\infty$ forms a orthogonal basis for $H_*^4(\Omega)$.
\end{lem}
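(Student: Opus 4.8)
The plan is to verify orthogonality directly from the eigenvalue equation, and then to establish completeness by reducing orthogonality in the two higher-order inner products to orthogonality in $L^2(\Omega)$, where completeness of $\{e_i\}_{i=1}^\infty$ is already known from spectral theory.

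First I would record the two pointwise identities that drive everything. Since $-e_{i,xx} = \lambda_i e_i$ on $\Omega$, differentiating twice gives $e_{i,xxxx} = \lambda_i^2 e_i$. As already noted in the excerpt, each $e_i$ lies in $H^2_*(\Omega) \cap H^4_*(\Omega)$, so all inner products below are defined. Orthogonality is then immediate: $(e_i, e_j)_{H^2_*} = \int_\Omega e_{i,xx} e_{j,xx}\,dx = \lambda_i \lambda_j \int_\Omega e_i e_j\,dx$, which vanishes for $i \neq j$ by $L^2$-orthogonality of the eigenfunctions; likewise $(e_i, e_j)_{H^4_*} = \lambda_i^2 \lambda_j^2 \int_\Omega e_i e_j\,dx = 0$ for $i \neq j$.

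For completeness I would use the standard Hilbert-space fact that an orthogonal family is an orthogonal basis precisely when its orthogonal complement is trivial; by Remark 2.3 both $H^2_*(\Omega)$ and $H^4_*(\Omega)$ are Hilbert spaces, so this criterion applies. Let $u \in H^2_*(\Omega)$ satisfy $(u, e_i)_{H^2_*} = 0$ for all $i$. Integrating by parts twice so as to move both derivatives onto $e_i$, I would show $(u, e_i)_{H^2_*} = \int_\Omega u_{xx} e_{i,xx}\,dx = -\lambda_i \int_\Omega u_{xx} e_i\,dx = \lambda_i^2 \int_\Omega u e_i\,dx = \lambda_i^2 (u,e_i)_{L^2}$, the boundary contributions $[u_x e_i]_a^b$ and $[u\, e_{i,x}]_a^b$ vanishing because both $u$ and $e_i$ vanish at $a$ and $b$. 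Since $\lambda_i > 0$, this forces $(u, e_i)_{L^2} = 0$ for every $i$, and completeness of $\{e_i\}_{i=1}^\infty$ in $L^2(\Omega)$ then gives $u = 0$. Hence the orthogonal complement of the span in $H^2_*(\Omega)$ is trivial, proving (i).

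Part (ii) follows the same template. For $u \in H^4_*(\Omega)$ with $(u, e_i)_{H^4_*} = 0$ for all $i$, four integrations by parts give $(u, e_i)_{H^4_*} = \int_\Omega u_{xxxx} e_{i,xxxx}\,dx = \lambda_i^2 \int_\Omega u_{xxxx} e_i\,dx = \lambda_i^2 \int_\Omega u\, e_{i,xxxx}\,dx = \lambda_i^4 (u, e_i)_{L^2}$, whence $(u,e_i)_{L^2} = 0$ for all $i$ and again $u = 0$. The one point requiring care, and the main obstacle, is checking that all four boundary terms vanish in this last computation: the terms $[u_{xxx} e_i]_a^b$ and $[u_x e_{i,xx}]_a^b$ vanish because $e_i$ and $e_{i,xx} = -\lambda_i e_i$ vanish on $\partial\Omega$, while $[u_{xx} e_{i,x}]_a^b$ and $[u\, e_{i,xxx}]_a^b$ vanish because $u$ and $u_{xx}$ vanish on $\partial\Omega$. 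This is exactly where the extra boundary condition built into the definition of $H^4_*(\Omega)$, namely $u_{xx} \in H^1_0(\Omega)$, is used, and it is the reason the reduction to $L^2$-orthogonality goes through.
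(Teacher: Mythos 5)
Your proposal is correct and follows essentially the same route as the paper: orthogonality falls out of the identities $e_{i,xx}=-\lambda_i e_i$ and $e_{i,xxxx}=\lambda_i^2 e_i$ together with $L^2$-orthogonality, and completeness is proved by integrating by parts to reduce $(u,e_i)_{H^2_*}=0$ (resp. $(u,e_i)_{H^4_*}=0$) to $(u,e_i)_{L^2}=0$ and invoking completeness of the eigenfunctions in $L^2(\Omega)$. The only cosmetic difference is that you carry out all four integrations by parts at once in part (ii) and track the boundary terms explicitly (correctly using $u_{xx}\in H^1_0(\Omega)$), whereas the paper integrates by parts twice and then reuses the equivalence established in part (i).
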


\begin{proof} First let us show that $e_i$ is orthogonal to $e_j$ if $i \neq j$ with respect to the $H^2_*(\Omega)$ inner product. Recalling the definition of the sequence $\{e_i\}_{i=1}^\infty$, we have that $-(e_i)_{xx} = \lambda_i e_i$ for all $i \in \mathbb{N}$. It follows that
\begin{equation*}
\begin{split}
(e_i,e_j)_{H^2_*} & = ((e_i)_{xx}, (e_j)_{xx})_{L^2} \\
                                & = \lambda_i \lambda_j (e_i,e_j)_{L^2} \\
                               & = \lambda_i \lambda_j \delta_{ij},
\end{split}
\end{equation*}
where $\delta_{ij}$ is the Kronecker delta. Thus we have the desired orthogonality property for $H^2_*(\Omega)$.

We will now show that the sequence, $\{e_i\}_{i=1}^\infty$ forms a complete basis for $H^2_*(\Omega)$. Towards this end it suffices to show that if there exists an element of $H^2_*(\Omega)$ such that $(u,e_i)_{H^2_*} = 0$ for all $i \in \mathbb{N}$, then $u \equiv 0$. So let us suppose that there exists a function $u \in H^2_*(\Omega)$ such that $(u,e_i)_{H^2_*} = 0$ for all $i \in \mathbb{N}$. Note that this is equivalent to $(u_{xx}, (e_i)_{xx})_{L^2} = 0$ for all $i \in \mathbb{N}$. Since $u \in H^2_*(\Omega)$ and $e_i$ is in $H^4_*(\Omega)$ for all $i \in \mathbb{N}$ we can integrate by parts. This gives us $(u, (e_i)_{xxxx})_{L^2} = 0$ for all $i \in \mathbb{N}$. Recalling the definition of the $e_i$ we have $(u, e_i)_{L^2}=0$ for all $i \in \mathbb{N}$. Since $\{e_i\}_{i=1}^\infty$ form a complete basis for $L^2(\Omega)$ we have that $u \equiv 0$. 

Now let us turn our attention to the sequence $\{e_i\}_{i=1}^\infty$ with relation to the $H^4_*(\Omega)$ inner-product. We will now see that this sequence is orthogonal with respect to this inner product. First note that $(e_i)_{xxxx} = \lambda_i^2 e_i$ for all $i \in \mathbb{N}$. It follows that $((e_i)_{xxxx}, (e_j)_{xxxx})_{L^2} = \lambda_i^2 \lambda_j^2 (e_i,e_j)_{L^2}$, for all $(i,j) \in \mathbb{N} \times \mathbb{N}$. The orthogonality of this sequence with respect to the $L^2(\Omega)$ norm then gives us our desired result.

It remains then to show that this sequence forms a complete basis for $H^4_*(\Omega)$. Again, it suffices to show that if there exists an element $u \in H^4_*(\Omega)$ such that $(u,e_i)_{H^4_*}=0$ for all $i \in \mathbb{N}$ then $u \equiv 0$. So let us suppose that there exists a function $u \in H^4_*(\Omega)$ such that $(u,e_i)_{H^4_*}=0$ for all $i \in \mathbb{N}$. Note that this is equivalent to $(u_{xxxx}, (e_i)_{xxxx})_{L^2}=0$ for all $i \in \mathbb{N}$. Again, recalling the definition of $\{e_i\}_{i=1}^\infty$ we have $(u_{xxxx}, e_i)_{L^2}=0$ for all $i \in \mathbb{N}$. Since $u \in H^4_*(\Omega)$ and all of the $e_i$ are in $H^2_*(\Omega)$ we can integrate by parts. This gives us that $(u_{xx}, (e_i)_{xx})_{L^2} = 0$ for all $i \in \mathbb{N}$. As is proven above, this is equivalent to $(u, e_i)_{L^2(\Omega)} = 0$ for all $i \in \mathbb{N}$. It follows that $u \equiv 0$. Completeness follows. 

\end{proof}

\subsection{Spaces involving time}

\begin{lem}
Let $X$ be a connected, compact topological space. Let $\{u_k\}_{k=1}^\infty$ be a sequence of continuous functions from $X$ into $\mathbb{R}$. Let $F$ be a continuous function from $\mathbb{R}$ into $\mathbb{R}$. Suppose that there exists a positive real number $C$ such that $||u_k||_{C(X)} \leq C$ for all $k \geq 1$. Then there exists a positive real number $D$ such that $||F(u_k)||_{C(X)} \leq D$ for all $k \geq 1$. 
\end{lem}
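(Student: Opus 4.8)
The plan is to exploit the fact that a continuous function on a compact set attains its bounds, so the real content is that all the $u_k$ take values in a single bounded interval, and $F$ restricted to a compact interval is itself bounded.

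First I would use the uniform bound $\|u_k\|_{C(X)} \leq C$ to observe that every value $u_k(x)$, for every $k$ and every $x \in X$, lies in the compact interval $[-C, C]$. The crucial point is that $C$ is independent of $k$, so a single interval captures the ranges of all the $u_k$ simultaneously.

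Next I would invoke the continuity of $F$: since $F$ is continuous on $\mathbb{R}$, its restriction to the compact interval $[-C,C]$ is continuous on a compact set, hence bounded there. Concretely, I would set
$$
D := \max_{s \in [-C,C]} |F(s)|,
$$
which is a finite nonnegative real number by the extreme value theorem (and one may take $D$ positive by enlarging it if necessary). The hypotheses that $X$ be connected and compact are stronger than needed — compactness of $[-C,C]\subset\mathbb{R}$ is what actually drives the argument, and connectedness of $X$ plays no role in the bound itself; I would simply not use it.

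Finally I would conclude by combining these two observations: for any $k \geq 1$ and any $x \in X$, since $u_k(x) \in [-C,C]$ we have $|F(u_k(x))| \leq D$, and taking the supremum over $x \in X$ yields $\|F(u_k)\|_{C(X)} \leq D$ for all $k \geq 1$, as desired. I do not anticipate a genuine obstacle here; the only point requiring the slightest care is ensuring $D$ is chosen uniformly in $k$, which is automatic once one notes that the single interval $[-C,C]$ bounds every $u_k$.
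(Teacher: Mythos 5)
Your proposal is correct and follows essentially the same route as the paper: confine all values of the $u_k$ to the single compact interval $[-C,C]$ and bound $|F|$ there by the extreme value theorem. The only (accurate) refinement is your observation that connectedness of $X$ is not needed; the paper uses it only to write $\|F(u_k)\|_{C(X)}$ as a maximum over the interval $[\min u_k, \max u_k]$, a step your argument bypasses by working with the containment $u_k(X)\subset[-C,C]$ directly.
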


\begin{proof}
Let $C$ be a positive real number such that $||u_k||_{C(X)} \leq C$ for all $k \geq 1$. Fix $k \in \mathbb{Z}$. Then there exists a positive real number $D$ that can be chosen independent of $k$ such that 
\begin{equation}
\begin{split}
||F(u_k)||_{C(X)} &= \max_{z \in [\min \; \text{of} \; u_k \; \text{on} \; X, \max \; \text{of} \;  u_k \; \text{on} \;  X]} |F(z)|\\ 
& \leq \max_ {z \in [-||u_k||_{C(X)}, ||u_k||_{C(X)}]} |F(z)| \\
& \leq \max_{z \in [-C, C]} |F(z)| \\
&  \leq D,
\end{split}
\end{equation}
for all $k \geq 1$. The lemma follows.
\end{proof}

\begin{remark} Let $T$ be a positive real number, and let $X$ be a  compact topological space. Let $\{u_k\}_{k=1}^\infty$ be a sequence of elements of $C([0,T];C(X))$. Then the elements of this sequence are also elements of $C(X \times [0,T])$. Furthermore, the property that $\{u_k\}_{i=1}^\infty$ is bounded with respect to the $C([0,T];C(X))$ norm  is equivalent to the property that $\{u_k\}_{k=1}^\infty$ is bounded with respect to the $C(X \times [0,T])$ norm.
\end{remark}

Another result that will be useful in the next section of this paper is the following

\begin{lem} Let $F$ be a $C^1$ function from $\mathbb{R}$ into $\mathbb{R}$. Let $T$ be a positive real number, and let $n$ be a positive integer. Let $U$ be a bounded, open subset of $\mathbb{R}^n$ with a $C^1$ boundary. Let $X$ be a real Banach space that is compactly embedded into $C(\overline{U})$. Let $\{u_k(t)\}_{k=1}^\infty$ be a sequence of elements of $C([0,T]; X)$. Furthermore, suppose that there exists a positive real number $C$ such that $||u_k(t)||_{C([0,T]; X)} \leq C$ for all positive integers $k$. Finally, suppose there exists a $C([0,T]; X)$ function $u$ such that $||u_k - u||_{C([0,T]; X)} \rightarrow 0$ as $k \rightarrow \infty$. Then we have $||F(u_k) - F(u))||_{C([0,T]; L^2(U))} \rightarrow 0$ as $k \rightarrow \infty$.
\end{lem}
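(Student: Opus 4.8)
The plan is to reduce the claim to a single elementary estimate: a pointwise Lipschitz bound for $F$ that is valid on one fixed compact interval containing the ranges of all the $u_k$ and of $u$. The first step is to produce such an interval. Since $X$ is compactly, hence in particular continuously, embedded in $C(\overline U)$, there is a constant $M>0$ with $\|v\|_{C(\overline U)} \le M\|v\|_X$ for every $v \in X$. Combining this with the hypothesis $\|u_k\|_{C([0,T];X)} \le C$ gives $|u_k(x,t)| \le MC$ for every $k$ and every $(x,t) \in \overline U \times [0,T]$, and passing to the uniform limit (or applying the embedding to $u$ directly) shows $|u(x,t)| \le MC$ as well. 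Thus, with $R:=MC$, all the values in sight lie in the compact interval $[-R,R]$.

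Second, because $F$ is $C^1$ its derivative is continuous and therefore bounded on $[-R,R]$; set $L:=\max_{s\in[-R,R]}|F'(s)|$. By the mean value theorem, $|F(s_1)-F(s_2)| \le L|s_1-s_2|$ for all $s_1,s_2 \in [-R,R]$. Applying this pointwise at each $(x,t)$ gives
$$|F(u_k(x,t)) - F(u(x,t))| \le L\,|u_k(x,t)-u(x,t)| \le L\,\|u_k(t)-u(t)\|_{C(\overline U)}.$$
Squaring, integrating over $U$ (whose Lebesgue measure $|U|$ is finite since $U$ is bounded), and taking square roots yields, for each $t$,
$$\|F(u_k(t)) - F(u(t))\|_{L^2(U)} \le L\,|U|^{1/2}\,\|u_k(t)-u(t)\|_{C(\overline U)}.$$

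Taking the supremum over $t \in [0,T]$ and using the continuous embedding once more,
$$\|F(u_k)-F(u)\|_{C([0,T];L^2(U))} \le L\,|U|^{1/2}\,\|u_k-u\|_{C([0,T];C(\overline U))} \le L\,|U|^{1/2}M\,\|u_k-u\|_{C([0,T];X)},$$
and the right-hand side tends to $0$ by hypothesis. Membership of $F(u_k)$ and $F(u)$ in $C([0,T];L^2(U))$ follows from the same Lipschitz bound, which shows $t\mapsto F(u_k(t))$ is continuous into $C(\overline U)$ and hence into $L^2(U)$. The only genuine subtlety is the first step: a $C^1$ function need not be globally Lipschitz, so the uniform bound $\|u_k\|_{C([0,T];X)}\le C$ is exactly what is needed to confine all arguments of $F$ to one compact interval and thereby obtain a single Lipschitz constant $L$ valid for every $k$ simultaneously. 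I note in passing that only the continuity of the embedding $X \hookrightarrow C(\overline U)$ is used here; compactness is not needed for this particular lemma.
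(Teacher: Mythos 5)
Your proof is correct, and it is organized differently from the paper's. The common core is the same in both arguments: the uniform bound $\|u_k\|_{C([0,T];X)}\le C$ together with the embedding into $C(\overline U)$ confines all values of $u_k$ (and of $u$) to one compact interval, on which the $C^1$ function $F$ admits a single Lipschitz constant $L$; that is exactly the paper's inequality $|F(s_1)-F(s_2)|\le L_F|s_1-s_2|$ on $[-M,M]$. Where you diverge is in what you do with this estimate. You apply it once, directly to $F(u_k)-F(u)$, and obtain $\|F(u_k)-F(u)\|_{C([0,T];L^2(U))}\le L|U|^{1/2}M\|u_k-u\|_{C([0,T];X)}\to 0$ in a single line. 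The paper instead first shows that $\{F(u_k)\}$ is Cauchy in $C([0,T];L^2(U))$, extracts a limit $Y$, then uses the \emph{compactness} of the embedding $X\hookrightarrow C(\overline U)$ to verify that $u(t)\in C(\overline U)$ with the right bound, and finally identifies $Y$ with $F(u)$ by a second application of the Lipschitz estimate. Your observation that compactness of the embedding is not needed is correct for the lemma as stated: since the hypotheses already place $u$ in $C([0,T];X)$ and give convergence $u_k\to u$ in that norm, continuity of the embedding suffices both to bound $\|u\|_{C(\overline U\times[0,T])}$ and to pass the convergence into $C(\overline U)$. (The paper's more roundabout route appears to be shaped by the way the lemma is later invoked, where the available convergence is only in $C([0,T];L^2)$ rather than in $C([0,T];X)$; but judged against the lemma's actual hypotheses, your direct argument is complete and simpler.) One small point done well: you also justify that $F(u_k)$ and $F(u)$ genuinely belong to $C([0,T];L^2(U))$, which the paper leaves implicit.
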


\begin{proof} Recalling Remark 2.6, we  put 
\begin{equation}\label{bounded}
M:=\sup_{k \in \mathbb{N}} ||u_k||_{C(\overline{U} \times [0,T])} < \infty.
\end{equation}
On the other hand, we know that $F$ is a $C^1$ function, and in particular it is locally Lipschitz continuous. Hence there exists a positive real number $L_F$ such that
\begin{equation}\label{Lipschitz}
|F(s_1)-F(s_{2})| \leq L_F |s_1 - s_{2}| \quad \text{for any} \quad  s_1, s_{2} \in [-M,M].
\end{equation}
We can now combine \eqref{bounded} wih \eqref{Lipschitz} to see that for all $(k,l) \in \mathbb{N} \times \mathbb{N}$,
\begin{equation}\label{Cauchy}
||F(u_k(t)) - F(u_l(t))||_{L^2} \leq  L_F ||u_k(t) - u_l(t)||_{L^2}, 
\end{equation}
for all $t \in [0,T]$, and hence
\begin{equation}
\max_{t \in [0,T]} ||F(u_k(t)) - F(u_l(t))||_{L^2} \leq L_F \max_{t \in [0,T]} ||u_k(t)-u_l(t)||_{L^2}
\end{equation}
for all $(k,l) \in \mathbb{N} \times \mathbb{N}$. Since the right hand side of the above inequality vanishes as $k$ and $l$ go to infinity we have that the sequence $\{F(u_k(t))\}_{k=1}^\infty$ is a Cauchy sequence with respect to the $C([0,T]; L^2(U))$ norm. It follows that there exists an element of $C([0,T]; L^2(U))$, $Y$, such that $||F(u_k)- Y||_{C([0,T]; L^2(U))}$ vanishes as $k$ goes to infinity.

Now we'll show that $u(t) \in C(\overline{U})$ for all $t \in [0,T]$. First, let $t_* $ be a real number in $[0,T]$. Due to our hypotheses, we know that $||u_k(t_*) - u(t_*)||_{L^2(U)} \rightarrow 0$ as $k \rightarrow \infty$. Due to the compactness of the embedding of $X$ into $C(\overline{U})$, we also have that there exists a subsequence $\{u_{k_l}(t_*)\}_{l=1}^\infty$ of $\{u_k(t_*)\}_{k=1}^\infty$ and a $C(\overline{U})$ function $Z(t_*)$ such that $||u_{k_l}(t_*) - Z(t_*)||_{C(\overline{U})} \rightarrow 0$ as $l \rightarrow \infty$. Since $||u_k(t_*) -  u(t_*)||_{C(\overline{U})} \rightarrow 0$ as $k$ goes to $\infty$, we have that $||u_{k_l}(t_*) - u(t_*)||_{C(\overline{U})} \rightarrow 0$ as $l \rightarrow \infty$. It follows that $Z(t_*) = u(t_*)$, and hence $u(t_*) \in C(\overline{U})$. Since $t_*$ could be any real number in $[0,T]$, we have that $u(t) \in C(\overline{U})$ for all $t \in [0,T]$.  

Now that we know that $u(t) \in C(\overline{U})$ for all $t \in [0,T]$, we can write
\begin{equation}\label{Cauchyb}
||F(u_k(t) - F(u(t))||_{L^2} \leq C ||u_k(t) - u(t)||_{L^2}, 
\end{equation}
where $C$ is a positive real number that does not depend on $t$. It follows that for all $t \in [0,T]$, $||F(u_k(t) - F(u(t))||_{L^2(U)} \rightarrow 0$ as $k \rightarrow \infty$. This in turn allows us to conclude that $Y(t) = u(t)$ for all $t \in [0,T]$. Since $||u_k -  Y||_{C([0,T]; L^2(U))} \rightarrow 0$ as $k \rightarrow \infty$, we have the lemma

\end{proof}

\section{Proof of Theorem 1.1}

\begin{proof}

{\it Step one.} Following \cite{GF}, we proceed as follows. For any $k \geq 1$ let us write $W_k = \text{span} \{e_1, \dots, e_k\}$, where $\{e_i\}_{i=1}^\infty$ is the set of eigenfunctions defined in section two of this paper. Let $\{\lambda_i\}_{i=1}^\infty$ be the set of eigenvalues defined in section two. For any $k \geq 1$ let
\begin{equation}\label{eq 1}
\begin{split}
u_0^k  & := \Sigma_{i=1}^k (u_0,e_i)_{L^2} \, e_i \\
            & = \Sigma_{i=1}^k \frac{(u_0,e_i)_{H_*^2}}{\lambda_i^2} \, e_i \\
            & = \Sigma_{i=1}^k \frac{(u_0,e_i)_{H_*^4}}{\lambda_i^4} \, e_i
\end{split}
\end{equation}
Let us also write
\begin{equation}\label{eq 2}
\begin{split}
u_1^k  & := \Sigma_{i=1}^k (u_1,e_i)_{L^2} \, e_i \\
            & = \Sigma_{i=1}^k \frac{(u_1,e_i)_{H_*^2}}{\lambda_i^2} \, e_i.
\end{split}
\end{equation}
It follows that $u_0^k \rightarrow u_0$ in $H^4_*(\Omega)$ and $u_1^k \rightarrow u_1$ in $H^2_*(\Omega)$ as $k \rightarrow \infty$.

The goal of this step is to establish that for any $k \geq 1$ there exists a unique solution $u_k \in C^3([0,T]; W_k)$ to the variational problem
\begin{equation}\label{eq 3}
\begin{split}
& ({u}''(t),v)_{L^2} + (u(t),v)_{H_*^2} + (F_1({u}'(t)),v)_{L^2} + (F_2(u(t)),v)_{L^2} =  (f(t),v)_{L^2} \\ & u(0) = u_0^k, u'(0) = u_1^k.
\end{split}
\end{equation}
for any $v \in W_k$ and $t \in (0,T)$.

Towards this end, let us first define $H(\Omega)$ to be the dual space of $H^2_*(\Omega)$ and let $<\cdot,\cdot>$ be the corresponding duality. Now, fix $k \in \mathbb{N}$  and make the ansatz
\begin{equation*}
u_k(t) = \Sigma_{i=1}^k g_i^k(t) e_i.
\end{equation*}
Let us also write 
\begin{equation*}
g^k(t) := (g_1^k(t),...,g_k^k(t))^T.
\end{equation*}
We want $u_k$ to solve \eqref{eq 3}. It follows that vector-valued function $g^k$ must solve
\begin{equation}\label{eq 4}
\begin{split}
& ({g^k}(t))'' + \Lambda_k g^k(t)  + \Gamma^1_k((g^k(t))') + \Gamma^2_k(g^k(t)) = G_k(t) \\  
& g^k(0) = u_0^k, ({g^k)}'(0) = u^k_1.
\end{split}
\end{equation}
for all $t \in (0,T)$. Here 
\begin{equation*}
\Lambda_k := \text{diag}(\lambda^2_1, \dots, \lambda^2_k),
\end{equation*}
$\Gamma^i_k$ is a map from $\mathbb{R}^k$ into $\mathbb{R}^k$ for all $i \in \{1,2\}$ defined by
\begin{equation*}
\Gamma^i_k(y_1,\dots,y_k) := ((F_i(\Sigma^k_{j=1} y_j  e_j),e_1)_{L^2}, \dots, (F_i(\Sigma^k_{j=1} y_j e_j),e_k)_{L^2})^T,
\end{equation*}
and
\begin{equation*}
G_k(t) := ((f(t), e_1)_{L^2}, \dots, (f(t), e_k)_{L^2})^T
\end{equation*} 
is a $C^1$ map from $[0,T]$ into $\mathbb{R}^k$. 

Since $F_i$ is a $C^1$ map from $\mathbb{R}$ into $\mathbb{R}$ for all $i \in \{1,2\}$, it follows that $\Gamma_k^i$ is a $C^1$ map from $\mathbb{R}^k$ into $\mathbb{R}^k$ for all $i \in \{1,2\}$. This implies that \eqref{eq 4} admits a unique local solution. This, in turn, allows us to conclude that $u_k(t)$ is a local solution in some maximal interval of continuation $[0,t_k)$, $t_k \in (0,T]$, of the problem
\begin{equation}\label{eq 5}
\begin{split} 
& {u_k''}(t) + L(u_k(t)) + P_k F_1(u_k'(t)) + P_k F_2(u_k(t)) = P_k f(t) \; \text{for any} \; t \in [0,t_k) \\
& u_k(0) = u_0^k, u_k'(0) = u_1^k. 
\end{split}
\end{equation}
where  $P_k: L^2(\Omega) \rightarrow W_k$ is the orthogonal projection onto $W_k$, and where $L: H^2_*(\Omega) \rightarrow H(\Omega)$ is defined by the rule $<Lu,v> = (u,v)_{H^2_*}$ for any $u$ and $v$ in $H^2_*(\Omega)$. It follows that we can write
\begin{equation}\label{eq 6.5}
\begin{split}
& (u_k''(t),e_i)_{L^2} + (u_k(t),e_i)_{H_*^2} + (F_1(u_k'(t)),e_i)_{L^2} + (F_2(u_k(t)),e_i)_{L^2}\\ & =  (f(t),e_i)_{L^2}, \\ & u_k(0) = u_0^k, u_k'(0) = u_1^k.
\end{split}
\end{equation}
for all $t \in (0,t_k)$ and for any $i \in \{1,2,3,...,k\}$. An immediate consequence of the above is that
\begin{equation}\label{eq 6}
\begin{split}
& (u_k''(t),v)_{L^2} + (u_k(t),v)_{H_*^2} + (F_1(u_k'(t)),v)_{L^2} + (F_2(u_k(t)),v)_{L^2}\\& =  (f(t),v)_{L^2}, \\ & u_k(0) = u_0^k, u'_k(0) = u_1^k.
\end{split}
\end{equation}
for all $v \in W_k$ and $t \in (0,t_k)$.

{\it Step two.} The goal of this step is to obtain a uniform bound on the sequence $\{u_k\}_{k=1}^\infty$ that is strong enough to guarantee that the ordinary differential equations defined in step one exist on all of $[0,T]$. Since $u_k'(t) \in W_k$ for all $t \in [0,t_k)$ we can use \eqref{eq 6.5} to show that
\begin{equation}\label{6.75}
\begin{split}
& (u_k''(t), u_k'(t))_{L^2} + (u_k(t),u_k'(t))_{H_*^2} + (F_1(u_k'(t)), u_k'(t))_{L^2} +  \\ & (F_2(u_k(t)),u_k'(t))_{L^2} = (f(t), u_k'(t))_{L^2} 
\end{split}
\end{equation} 
for any $t \in [0,t_k)$. Now, let $f_2 : \mathbb{R} \rightarrow \mathbb{R}$ be defined as follows: $f_2(s) := \int_{0}^s F_2(t) dt$. Furthermore, let  $V: H^2_*(\Omega) \rightarrow \mathbb{R}$ be defined by the rule $u \rightarrow \int_\Omega f_2(u) dx$. We can now write
\begin{equation}\label{eq 7}
\begin{split}
&\frac{1}{2} \frac{d}{dt} (u_k'(t),u_k'(t))_{L^2} + \frac{1}{2} \frac{d}{dt} (u_k(t),u_k(t))_{H_*^2} + \frac{d}{dt} V(u_k(t)) \\ & + (F_1(u_k'(t)),u_k'(t))_{L^2} = (f(t),u_k'(t))_{L^2}. 
\end{split}
\end{equation}
for any $t \in [0,t_k)$. Now, invoking the hypotheses of Theorem 1.1, we see that $F_1(a)a \geq 0$ for all $a \in \mathbb{R}$ and $V(u) \geq c(b-a)$ for all $u \in H^2_*(\Omega)$. Note as well that $(v,v)_{H_*^2} \geq 0$ for all $v \in H_*^2(\Omega)$. Given these observations, we can now conclude that for all $t \in [0,t_k)$
\begin{equation}\label{eq 8}
\begin{split}
&\frac{1}{2} \frac{d}{dt} (u_k'(t),u_k'(t))_{L^2} + \frac{1}{2} \frac{d}{dt} (u_k(t),u_k(t))_{H_*^2} + \frac{d}{dt} (V(u_k(t))-c(b-a)) \\
& \leq \frac{1}{2} (f(t),f(t))_{L^2} + (\frac{1}{2} (u_k'(t),u_k'(t))_{L^2} + \frac{1}{2}(u_k(t),u_k(t))_{H_*^2}\\ & + (V(u_k(t))-c(b-a)) 
\end{split}
\end{equation}
We can now invoke Gronwall's inequality and the non-negativity of $V(\cdot) - c(b-a)$ to conclude that for all $t \in [0,t_k)$
\begin{equation}\label{eq 9}
\begin{split}
&\frac{1}{2}(u_k'(t),u_k'(t))_{L^2} + \frac{1}{2}(u_k(t),u_k(t))_{H_*^2} \\  & \leq e^t(\frac{1}{2} (u_k'(0),u_k'(0))_{L^2} + \frac{1}{2}(u_k(0),u_k(0))_{H_*^2} + (V(u_k(0)) - c(b-a))  \\ & + \frac{1}{2} \int_0^t (f(s),f(s))_{L^2} \, ds) \\ & \leq e^T(\frac{1}{2} (u_k'(0),u_k'(0))_{L^2} + \frac{1}{2}(u_k(0),u_k(0))_{H_*^2} + (V(u_k(0))-c(b-a))  \\ &+  \frac{1}{2} \int_0^T (f(s),f(s))_{L^2} \, ds) \\
&  \leq e^T(\frac{1}{2} (u^k_1, u^k_1)_{L^2} + \frac{1}{2}(u^k_0, u^k_0)_{H_*^2} + (V(u^k_0)-c(b-a)) \\ & + \frac{1}{2} \int_0^T (f(s),f(s))_{L^2} \, ds) 
\end{split}
\end{equation}

Due to equations \eqref{eq 1} and \eqref{eq 2} we have that $(u^k_1,u^k_1)_{L^2} \leq (u_1,u_1)_{L^2}$ and 
\begin{equation}\label{embedding}
(u^k_0,u^k_0)_{H_*^2} \leq (u_0,u_0)_{H_*^2}.
\end{equation}
for all $k \in \mathbb{N}$. Now recall that there exists a continuous embedding of $H^2_*(\Omega)$ into $C(\overline{\Omega})$.  We can then use \eqref{embedding} to conclude that $||u_0^k||_{C(\overline{\Omega})}$ is bounded with respect to $k$. Invoking Lemma 2.5, we have that $||f_2(u_0^k)||_{C(\overline{\Omega})}$ is bounded with respect to $k$, and hence $|V(u_0^k)|$ is bounded with respect to $k$. It follows that we can use \eqref{eq 9} to conclude that there exists a positive real number $C$ such that
\begin{equation*}
(u_k'(t),u_k'(t))_{L^2} + (u_k(t),u_k(t))_{H_*^2} \leq C.
\end{equation*} 
for any $t \in [0,t_k)$ and  $k \geq 1$. This uniform bound allows us to conclude that the solution $u_k(t)$ is globally defined on $[0,T]$ and the sequence $\{u_k\}_{k=1}^\infty$ is bounded in $C([0,T]; H_*^2(\Omega)) \cap C^1([0,T]; L^2(\Omega))$. 

{\it Step three.}
The goal of this step is to improve upon the uniform bounds on $\{u_k\}_{k=1}^\infty$ obtained in step two, Since $u_k \in C^3([0,T];  W_k)$;
\begin{equation*}
f \in C^1([0,T]; L^2(\Omega));
\end{equation*}
and $F_i$ is a $C^1$ map from $\mathbb{R}$ into $\mathbb{R}$ for all $i \in \{1,2\}$, we can differentiate \eqref{eq 6.5} with respect to time. Then, since $u_k''(t) \in W_k$ for all $t \in [0,T]$ we have
\begin{equation}\label{eq 10}
\begin{split}
& (u_k'''(t),u_k''(t))_{L^2} + (u_k'(t),u_k''(t))_{H^2_*} + (\frac{d F_1}{d u_k'} u_k''(t), u_k''(t))_{L^2} \\ &+  (\frac{d F_2}{d u_k} u_k'(t), u_k''(t))_{L^2} = (f'(t), u_k''(t))_{L^2}
\end{split}
\end{equation}
for any $t \in [0,T]$. 
Since we are assuming that $F_1$ is a nondecreasing function, we have
\begin{equation*}
 \frac{d F_1}{d u_k'} \geq 0,
\end{equation*}
for all values of its argument. Thus, we have
\begin{equation}\label{eq 10c}
\begin{split}
& (u_k'''(t),u_k''(t))_{L^2} + (u_k'(t),u_k''(t))_{H^2_*} +  (\frac{d F_2}{d u_k} u_k'(t), u_k''(t))_{L^2} \\ & \leq (f'(t), u_k''(t))_{L^2}
\end{split}
\end{equation}
This, in turn, allows us to write
\begin{equation}\label{eq 11}
\begin{split}
& \frac{1}{2} \frac{d}{dt}(u_k''(t),u_k''(t))_{L^2} + \frac{1}{2}\frac{d}{dt}(u_k'(t),u_k'(t))_{H^2_*} \\
& \leq |(\frac{d F_2}{d u_k} u_k'(t), u_k''(t))_{L^2}| + \frac{1}{2}(f'(t),f'(t))_{L^2} + \frac{1}{2}(u_k''(t),u_k''(t))_{L^2} 
\end{split}
\end{equation}
for any $t \in [0,T]$.

In step two we showed that $||u_k||_{C([0,T]; H_*^2(\Omega))}$ is bounded with respect to $k$. It follows that $||u_k||_{C([0,T];C(\overline{\Omega}))}$ is bounded with respect to $k$. Applying Lemma 2.5 and Remark 2.6 to $\frac{dF_2}{du_k}$, we see that $|| \frac{d F_2}{d u_k}||_{C([0,T]; C(\overline{\Omega}))}$ is bounded with respect to $k$. It follows that 
\begin{equation}\label{eq 12}
\begin{split}
& \frac{1}{2} \frac{d}{dt}(u_k''(t),u_k''(t))_{L^2} + \frac{1}{2}\frac{d}{dt}(u_k'(t),u_k'(t))_{H^2_*} \\
& \leq C(\frac{1}{2} ||u_k'(t)||^2_{L^2} + \frac{1}{2} ||u_k''(t)||^2_{L^2}) + \frac{1}{2}||f'(t)||^2_{L^2} + \frac{1}{2} ||u_k''(t)||^2_{L^2}
\end{split}
\end{equation}
for any $t \in [0,T]$ and  $k \geq 1$ for some constant $C$ independent of $k \geq 1$ and $t \in [0,T]$. 

Now recalling Lemma 2.2, we know that there exists a positive real number $C$ such that
\begin{equation*}
(w,w)_{L^2} \leq C(w,w)_{H_*^2}
\end{equation*}
for all $w \in H^2_*(\Omega)$. It follows that
\begin{equation}\label{eq 13}
\begin{split}
& \frac{1}{2}\frac{d}{dt} ||u_k''(t)||^2_{L^2} + \frac{1}{2}\frac{d}{dt}||u_k'(t)||^2_{H_*^2} \\
& \leq C(\frac{1}{2} ||u_k''(t)||^2_{L^2} + \frac{1}{2}||u_k'(t)||^2_{H_*^2}) + \frac{1}{2}||f'(t)||^2_{L^2}.
\end{split}
\end{equation}
for any $t \in [0,T]$ and $k \geq 1$ for some constant $C$ independent of $k \geq 1$ and $t \in [0,T]$. Invoking Gronwall's inequality, we see that there exists a positive real number $C$ such that for all $t \in [0,T]$ and $k \geq 1$
\begin{equation}\label{eq 14}
\begin{split}
& \frac{1}{2} ||u_k''(t)||^2_{L^2} + \frac{1}{2} ||u_k'(t)||^2_{H_*^2} \\
& \leq e^{Ct}(\frac{1}{2}||u_k''(0)||^2_{L^2} + \frac{1}{2} ||u_k'(0)||_{H_*^2}^2 + \int_0^t || f'(s)||^2_{L^2} \, ds) \\
& \leq e^{CT} (\frac{1}{2}||u_k''(0)||^2_{L^2} + \frac{1}{2} ||u_1^k||_{H_*^2}^2 + \int_0^T || f'(s)||^2_{L^2} \, ds) \\
&  \leq e^{CT} (\frac{1}{2}||u_k''(0)||^2_{L^2} + \frac{1}{2} ||u_1||_{H_*^2}^2 + \int_0^T || f'(s)||^2_{L^2} \, ds).
\end{split}
\end{equation}
An immediate consequence of \eqref{eq 14} is that there exists two positive real numbers $C_1$ and $C_2$ such that for all $t \in [0,T]$ and  $k \geq 1$
\begin{equation}\label{eq 16}
\begin{split}
& \frac{1}{2} ||u_k''(t)||^2_{L^2} + \frac{1}{2} ||u_k'(t)||^2_{H_*^2} \\
& \leq e^{C_1 T}(C_2 + (\frac{1}{2}||u_k''(0)||^2_{L^2})).
\end{split}
\end{equation}
\eqref{eq 16} tells us that all we need to do to obtain uniform bounds on $||u_k''(t)||_{L^2}$ and $||u_k'(t)||_{H_*^2}$ is to show that $||u_k''(0)||_{L^2}$ is bounded with respect to $k$. Towards this end let us recall \eqref{eq 5} and write
\begin{equation}\label{eq 17}
\begin{split}
& ||u_k''(0)||_{L^2}^2 = ||-L(u_k(0)) - P_k F_1(u_1^k) - P_k F_2(u_0^k) + P_k f(0)||_{L^2}^2 \\
& \leq 2(||L(u_k(0))||_{L^2}^2 + ||P_k F_1(u^k_1)||_{L^2}^2 + ||P_k F_2(u^k_0)||^2_{L^2} +||P_k f(0)||_{L^2}^2),\\
& \leq 2(||L(u_k(0))||_{L^2}^2 + || F_1(u^k_1)||_{L^2}^2 + || F_2(u^k_0)||^2_{L^2} +||f(0)||_{L^2}^2),\\
&\leq 2(||{u_k}(0)||_{H_*^4}^2 + || F_1(u^k_1)||_{L^2}^2 + || F_2(u^k_0)||^2_{L^2} +||f(0)||_{L^2}^2),\\
&\leq 2(||u^k_0||_{H_*^4}^2 + || F_1(u^k_1)||_{L^2}^2 + || F_2(u^k_0)||^2_{L^2} +||f(0)||_{L^2}^2), \\
& \leq 2(||u_0||_{H_*^4}^2 + || F_1(u^k_1)||_{L^2}^2 + || F_2(u^k_0)||^2_{L^2} +||f(0)||_{L^2}^2), \\ 
&\leq C + 2|| F_1(u^k_1)||_{L^2}^2 + 2|| F_2(u^k_0)||^2_{L^2}, 
\end{split}
\end{equation}
for any $k \geq 1$, where $C$ is a positive real number that does not depend on $k \geq 1$. Here we used \eqref{eq 1} to establish the next to last inequality in the above string of inequalities. It follows that all we need to do is to show that the second and third terms in the last line in the above string of inequalities are bounded with respect to $k$. To show that this is the case, let us recall again \eqref{eq 1} and \eqref{eq 2}. They imply that there exists a positive real number $C$ such that for all $k \geq 1$, $||u^k_0||_{H_*^4} + ||u^k_1||_{H_*^2} \leq C$. Since there exists a continuous embedding of $H_*^4(\Omega)$ and $H_*^2(\Omega)$ into $C(\overline{\Omega})$, we know that $||u^k_0||_{C(\overline{\Omega})} + ||u^k_1||_{C(\overline{\Omega})}$ is bounded with respect to $k$. Then Lemma 2.5 gives us that there exists a positive real number $C$ such that for all $k \geq 1$ 
\begin{equation}\label{eq 18}
|| F^2_1(u_1^k)||_{C(\overline{\Omega})} + ||F^2_{2}(u_0^k)||_{C(\overline{\Omega})} \leq C,
\end{equation}
Combining \eqref{eq 17} and \eqref{eq 18}, we see that there exists a positive real number $C$ such that for all $k \geq 1$ 
\begin{equation}\label{eq 19}
||u_k''(0)||_{L^2} \leq C.
\end{equation}
Recalling inequality \eqref{eq 16}, we can conclude that 
\begin{equation}\label{eq 20}
||u_k''(t)||_{L^2} + ||u_k'(t)||_{H_*^2} \leq C
\end{equation}
for any $t \in [0,T]$ and $k \geq 1$, where $C$ is a positive real number that does not depend on $t \in [0,T]$ or $k \geq 1$.

{\it Step four.} The goal of this step is to obtain a bound on
$$
(u_k(t),u_k(t))_{H_*^4}
$$
that is independent of $t \in [0,T]$ and $k \geq 1$. Since $L(u_k(t))$ is in $W_k$ for all $t \in [0,T]$ and $k \geq 1$, we can invoke \eqref{eq 6.5} and write 
\begin{equation}\label{eq 21}
(u_k(t), L(u_k(t)))_{H_*^2} = (f(t) - F_1(u_k'(t)) - F_2(u_k(t)) - u_k''(t), L(u_k(t)))_{L^2}, 
\end{equation}
for all $t \in [0,T]$. Now recall \eqref{eigenfunctions}. It implies that all of the elements of the sequence $\{e_i\}_{i=1}^\infty$ have the property that their second and fourth derivatives with respect to $x$ vanish on the boundary of $\Omega$. Since $u_k$ is in $W_k$ for all $t \in [0,T]$ and for all $k \in \mathbb{N}$, it follows that its second and fourth derivatives with respect to $x$ vanish on the boundary, for all $t \in [0,T]$ and for all $k \in \mathbb{N}$. We can now integrate by parts and obtain the following equation:
\begin{equation}\label{eq 22}
(u_k(t),u_k(t))_{H_*^4} = (f(t) - F_1(u_k'(t)) - F_2(u_k(t)) - u_k''(t), L(u_k(t)))_{L^2},
\end{equation}
for all $t \in [0,T]$. Applying the H\"older inequality to the right-hand side of the above equation, we have 
\begin{equation}\label{eq 23}
||u_k(t)||_{H_*^4} \leq ||f(t)- F_1(u_k'(t)) - F_2(u_k(t)) - u_k''(t)||_{L^2},
\end{equation}
for all $t \in [0,T]$ and $k \geq 1$. Now we can conclude that
\begin{equation}\label{eq 24}
||u_k(t)||_{H^4_*}^2 \leq 2(||f(t)||^2_{L^2} + ||F_1(u_k'(t))||^2_{L^2} + ||F_2(u_k(t))||_{L^2}^2 + ||u_k''(t)||^2_{L^2}),
\end{equation}
for all $t \in [0,T]$ and $k \geq 1$. 

Now recall that in steps 2 and 3 we showed that there exists a positive real number $C$ such that for all $k \geq 1$
\begin{equation}\label{eq 25}
\begin{split}
& ||u_k||_{C([0,T]; H^2_*(\Omega))} + ||u'_k||_{C([0,T]; H^2_*(\Omega))} \\ & + ||u_k''||_{C([0,T]; L^2(\Omega))} \leq C
\end{split}
\end{equation}
Since there exists a continuous embedding of $H^2_*(\Omega)$ into $C(\overline{\Omega})$, we also have that there exists a positive real number $C$ such that for all $k \geq 1$
\begin{equation}\label{eq 26}
||u_k||_{C([0,T]; C(\overline{\Omega}))} + ||u_k'||_{C([0,T]; C(\overline{\Omega}))} + ||u_k''||_{C([0,T]; L^2(\Omega))} \leq C.
\end{equation}
We can now invoke Lemma 2.5 and Remark 2.6  and see that there exists a positive real number $C$ such that for all $k \geq 1$
\begin{equation}\label{eq 27}
\begin{split}
& ||F^2_1(u_k')||_{C([0,T]; C(\overline{\Omega}))} +  ||F^2_{2}((u_k)||_{C([0,T]; C(\overline{\Omega}))}\\ & + ||u_k''||_{C([0,T]; L^2(\Omega))} \leq C.  
\end{split}
\end{equation}
Combining \eqref{eq 27} with \eqref{eq 24}, we can conclude that there exists a positive real number $C$ such that for all $k \geq 1$
\begin{equation}\label{eq 28}
||u_k||_{C([0,T]; H^4_*(\Omega))} \leq C.
\end{equation}

{\it Step five.} In steps 2-4 we obtained the existence of a positive real number $C$ such that
\begin{equation}\label{eq 29}
||u_k||_{C([0,T]; H_*^4(\Omega))} + ||u_k'||_{C([0,T]; H^2_*(\Omega))} + ||u_k''||_{C([0,T]; L^2(\Omega))}.
 \leq C.
\end{equation}
for all $k \geq 1$. It follows that there exists a subsequence $\{u_{k_l}\}_{l=1}^\infty$ of $\{u_k\}_{k=1}^\infty$ and a function $u$ that is in $W^{2,\infty}(0,T; L^2(\Omega))$, $W^{1,\infty}(0,T; H^2_*(\Omega))$, and $L^\infty(0,T; H^4_*(\Omega))$
such that
\begin{equation}\label{eq 30}
\begin{split}
& (a) u_{k_l}\; \text{converges weakly to}\;  u \; \text{with respect to the} \; L^2(0,T ;H_*^4(\Omega)) \; \text{norm} \\
& \text{as} \; l \rightarrow \infty. \\
& (b) u_{k_l}'\; \text{converges weakly to}\; u' \; \text{with respect to the} \; L^2(0,T;H^2_*(\Omega)) \; \text{norm} \\
& \text{as} \;  l \rightarrow \infty. \\
& (c) u_{k_l}''\; \text{converges weakly to}\; u'' \; \text{with respect to the} \; L^2(0,T ;L^2(\Omega)) \; \text{norm} \\
& \text{as} \;  l \rightarrow \infty.
\end{split}
\end{equation}

A consequence of the above is that we can assume without loss of generality that the above subsequence has the property that
\begin{equation}\label{compactness}
\begin{split}
& (a) u_{k_l}\; \text{converges to}\; u \; \text{with respect to the} \; C([0,T]; L^2(\Omega)) \; \text{norm} \\
& \text{as} \;  l \rightarrow \infty. \\
& (b) u_{k_l}'\; \text{converges to}\; u' \; \text{with respect to the} \; C([0,T]; L^2(\Omega)) \; \text{norm} \\
& \text{as} \;  l \rightarrow \infty. 
\end{split}
\end{equation}

Let us now observe that \eqref{eq 29} also implies that there exists a constant $C$ such that for all $l \geq 1$
\begin{equation}\label{eq 31}
||u_{k_l}||_{C([0,T];C(\overline{\Omega}))} + ||u_{k_l}'||_{C([0,T];C(\overline{\Omega}))} \leq C.
\end{equation}

We can now combine \eqref{compactness} and  \eqref{eq 31} with Lemma 2.7 to obtain 
\begin{equation}\label{eq 32}
\begin{split}
&(a) F_1(u_{k_l}') \rightarrow F_1(u') \; \text{with respect to the} \; C([0,T];L^2(\Omega)) \; \text{norm} \\ 
& \text{as} \; l \rightarrow \infty. \\
&(b) F_2(u_{k_l}) \rightarrow F_2(u) \; \text{with respect to the} \; C([0,T];L^2(\Omega)) \; \text{norm} \\ 
& \text{as} \; l \rightarrow \infty.
\end{split}
\end{equation}

{\it Step six.} In this step we show that the function $u(t)$, which is defined in step five, is a high regularity weak solution of \eqref{IBVP}. Proceeding as in section 7.2 of \cite{E}, we fix a positive integer $N$ and choose a function $v \in C^1([0,T];H^2_*(\Omega))$ of the form 
\begin{equation}\label{eq 33}
v(t) = \Sigma_{i=1}^N d_i(t) e_i,
\end{equation}
where $\{d_i\}_{i=1}^N$ are smooth functions. We select $k \geq N$, multiply \eqref{eq 6.5} by $d_i(t)$, sum $i=1,\dots, N$, and then integrate with respect to $t$, to discover
\begin{equation}\label{eq 34}
\begin{split}
& \int_{0}^T ((u_k''(t),v(t))_{L^2} + (u_k(t),v(t))_{H^2_*} + (F_1(u_k'(t)),v(t))_{L^2} \\ & + (F_2(u_k(t)),v(t))_{L^2}) \, dt =  \int_0^T (f(t),v(t))_{L^2}  \, dt.
\end{split}
\end{equation}

Next we set $k =k_l$ and use \eqref{eq 30}, \eqref{compactness}, and  \eqref{eq 32} to see that the function $u$ defined in step five satisfies the following equation:
\begin{equation}\label{eq 35}
\begin{split}
& \int_{0}^T ((u''(t),v(t))_{L^2} + (u(t),v(t))_{H^2_*} + (F_1(u'(t)),v(t))_{L^2} + \\ &  (F_2(u(t)),v(t))_{L^2}) \, dt = \int_0^T (f(t),v(t))_{L^2} \, dt.
\end{split} 
\end{equation}
This equation then holds for all $v \in L^2(0,T;H_*^2(\Omega))$, since functions of the form \eqref{eq 33} are dense in this space. This in turn allows us to conclude that $u$ satisfies \eqref{strong}. Finally, recalling \eqref{eq 1} and \eqref{eq 2}, we see that $u^k_0 \rightarrow u_0$ with respect to the $H_*^4(\Omega)$ norm and $u^k_1 \rightarrow u_1$ with respect to the $H^2_*(\Omega)$ norm. It follows that $u(0)=u_0$ and $u'(0) = u_1$. We can now conclude that $u$ is a high regularity weak solution of \eqref{IBVP}.

{\it Step seven.} In this step, we will show that the high regularity weak solution of \eqref{IBVP} constructed in step five is the unique high regularity weak solution of \eqref{IBVP}. Fix $t \in [0,T]$. Following arguments given in step six of this paper, we see that a high regularity weak solution $u$ of \eqref{IBVP} has the property that
\begin{equation}\label{integral}
\begin{split}
& \int_{0}^t ((u''(s),w(s))_{L^2} + (u(s),w(s))_{H^2_*} + (F_1(u'(s)),w(s))_{L^2} + \\ &  (F_2(u(s)),w(s))_{L^2}) \, ds = \int_0^t (f(s),w(s))_{L^2} \, ds
\end{split}
\end{equation}
for every $w \in L^2(0,t; H^2_*(\Omega))$. Now, let $u$ and $v$ be high regularity weak solutions of \eqref{IBVP}. Then we have
\begin{equation}\label{pre}
\begin{split}
& \int_{0}^t  ((u''(s) - v''(s), w(s))_{L^2} + (u(s)-v(s), w(s))_{H^2_*} + \\ & (F_1(u'(s))-F_1(v'(s)), w(s)))_{L^2} +  (F_2(u(s)) - F_2(v(s)), w(s))_{L^2}) \, ds=0,
\end{split}
\end{equation}
for all $w \in L^2(0,t;H^2_*(\Omega))$. 

We will now show that $u \equiv v$. Set $w = u' - v'$. Then we have
\begin{equation}\label{test}
\begin{split}
& \int_{0}^t  ((u''(s) - v''(s), u'(s) - v'(s))_{L^2} + (u(s)-v(s), u'(s) - v'(s))_{H^2_*} + \\
& (F_1(u'(s))-F_1(v'(s)), u'(s) - v'(s))_{L^2} + \\ 
& (F_2(u(s)) - F_2(v(s)), u'(s)-v'(s))_{L^2}) \, ds=0,
\end{split}
\end{equation}
 Calculus then gives us 
\begin{equation}\label{eq 37}
\begin{split}
& \int_0^t  (\frac{1}{2} \frac{d}{ds} ||u'(s)-v'(s)||_{L^2}^2 +  \frac{1}{2} \frac{d}{ds} ||u(s)-v(s)||_{H^2_*}^2 + \\
& (F_1(u'(s))-F_1(v'(s)), u'(s) - v'(s))_{L^2} + \\
& (F_2(u(s)) - F_2(v(s)), u'(s) - v'(s))_{L^2}) \, ds =0.
\end{split}
\end{equation}
Now, since $F_1$ is a nondecreasing function, we have
\begin{equation}\label{eq 38}
\begin{split}
& \int_0^t (\frac{1}{2} \frac{d}{ds} ||u'(s)-v'(s)||_{L^2}^2 + \frac{1}{2} \frac{d}{ds} ||u(s)-v(s)||_{H^2_*}^2  + \\ & (F_2(u(s)) - F_2(v(s)), u'(s) - v'(s))_{L^2}) \, ds  \leq 0.
\end{split}
\end{equation}
This in turn allows us to conclude that
\begin{equation}\label{eq 39}
\begin{split}
\int_0^t & (\frac{1}{2} \frac{d}{ds}||u'(s)-v'(s)||_{L^2}^2 + \frac{1}{2} \frac{d}{ds} ||u(s)-v(s)||_{H^2_*}^2) \, ds \\ & \leq \frac{1}{2} \int_0^t ||F_2(u(s)) - F_2(v(s))||_{L^2}^2 ds  + \frac{1}{2} \int_0^t ||u'(s) - v'(s)||_{L^2}^2 \, ds.
\end{split}
\end{equation}
An immediate consequence of the above is that
\begin{equation}\label{inequality}
\begin{split}
& \frac{1}{2} ||u'(t)-v'(t)||_{L^2}^2 + \frac{1}{2}||u(t)-v(t)||_{H^2_*}^2 \\ & \leq \frac{1}{2} \int_0^t ||F_2(u(s)) - F_2(v(s))||_{L^2}^2 \, ds  + \frac{1}{2} \int_0^t ||u'(s) - v'(s)||_{L^2}^2 \, ds.
\end{split}
\end{equation}

Now, recall that $F_2$ is a $C^1$ function from $\mathbb{R}$ into $\mathbb{R}$. It follows that there exists a positive real number $L_{F_2}$ 
such that
\begin{equation}\label{eq 40}
|F_2(s_1) - F_2(s_{2})| \leq L_{F_2}|s_1-s_2|,
\end{equation}
for all $s_1, s_{2} \in [-M,M]$, where $M$ is the maximum of the elements of the set
$$
\{||u(t)||_{C([0,T];C(\overline{\Omega}))},||v(t)||_{C([0,T];C(\overline{\Omega}))}\}.
$$
The definition of $M$, on the other hand, allows us to conclude that there exists a positive real number $C$ such that
\begin{equation}\label{eq 41}
||F_2(u(t)) - F_2(v(t))||^2_{L^2} \leq C||u(t)-v(t)||_{L^2}^2.
\end{equation}
for all $t \in [0,T]$. Note that we can assume without loss of generality that $C \geq 1$, so let us assume that this is the case. We now have
\begin{equation}\label{eq 42}
\begin{split}
& \frac{1}{2} ||u'(t)-v'(t)||_{L^2}^2 + \frac{1}{2} ||u(t)-v(t)||_{H^2_*}^2  \\ & \leq \int_0^t \frac{1}{2}C||u(s) -v(s)||_{L^2}^2  \, ds + \int_0^t  \frac{1}{2}C|| u'(s) - v'(s))||_{L^2}^2 \,  ds
\end{split}
\end{equation}
for all $t \in [0,T]$. We can now invoke Lemma 2.2 to conclude that there exists a positive constant $C$ such that
\begin{equation}\label{eq 43}
||u(t)-v(t)||^2_{H^2_*} \geq C ||u(t)-v(t)||^2_{L^2},
\end{equation}
for all $t \in [0,T]$. Again, we can assume that $C$ is greater than one, so let us do so. It follows that there exists a positive constant $C$ such that
\begin{equation}\label{eq 44}
\begin{split}
& \frac{1}{2} ||u'(t)-v'(t)||_{L^2}^2 + \frac{1}{2} ||u(t)-v(t)||_{H^2_*}^2  \\ & \leq  \int_0^t \frac{1}{2}C|| u'(s) - v'(s))||_{L^2}^2 \, ds + \int_0^t \frac{1}{2}C||u(s) -v(s)||_{H^2_*}^2 \, ds 
\end{split}
\end{equation}
for all $t \in [0,T]$. We can now invoke Gronwall's inequality to conclude that $u(t) \equiv v(t)$ for all $t \in [0,T]$. Uniqueness of high regularity weak solutions of \eqref{IBVP} follows.

\end{proof}

\bibliographystyle{amsplain}

\end{document}